\documentclass[12pt]{article}
\usepackage[usenames]{color} 
\usepackage{amssymb} 
\usepackage{amsmath} 
\usepackage{amssymb,bm,amsmath,amssymb,amsthm}
\usepackage[]{inputenc}

\usepackage{cite}

\usepackage{hyperref}

\usepackage{authblk}

\newtheorem{theorem}{Theorem}

\begin{document}

\title{Curves with many points over finite fields: the class field theory approach}
\author{Pavel Solomatin \\
\texttt{p.solomatin@math.leidenuniv.nl}
}
\affil{Leiden University, Mathematical Department,\\
Niels Bohrweg 1, 2333 CA Leiden}

\maketitle

\begin{abstract}
The problem of constructing curves with many points over finite fields has received considerable attention in the recent years. Using the class field theory approach, we construct new examples of curves ameliorating some of the known bounds. More precisely, we improve the lower bounds on the maximal number of points $N_q(g)$ for many values of the genus $g$  and of the cardinality $q$ of the finite field $\mathbb F_q$, by looking at all unramified coverings of all genus three smooth projective curves over $\mathbb F_q$, for $q$ is an odd prime less than 19.
\end{abstract}

\textbf{\\ \\ \\ Acknowledgements:} 
First, I would like to thank both my advisors, namely professor Alexey Zykin and professor Alexey Zaytsev for their advices and many helpful discussions about the project. I also want to thank professors Sergey Galkin and Peter Bruin for providing access to the Magma computer algebra system. We use the database \cite{Quart1} of plane quartics over finite fields. The access to the database was given by Christophe Ritzenthaler. His advices and suggestions were also helpful. Finally, I would like to thank an anonymous reviewer of an earlier version of the paper for a lot of important comments.

\newpage

\section{Introduction to the problem}

In this paper $C$ is a smooth projective curve over a finite field. An interesting question is how many points there can be on a curve of given genus over a given finite field. Let $\#C(\mathbb F_{q})$ denote the number of points on $C$ over $\mathbb F_{q}$. We have the following classical result:
 
 \begin{theorem}[Weil bound] 
 \label{Weil} Let $g$ be the genus of the curve $C$, then
 $$|q+1 - \#C(\mathbb F_{q})| \le 2g\sqrt{q}.$$
\end{theorem}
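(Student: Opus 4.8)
The plan is to deduce the estimate from the Riemann Hypothesis for the zeta function of $C$. Recall that
\[
Z(C/\mathbb{F}_q, T) = \exp\left(\sum_{n \ge 1} \#C(\mathbb{F}_{q^n}) \frac{T^n}{n}\right)
\]
is a rational function of the shape $Z(C/\mathbb{F}_q, T) = \frac{P(T)}{(1-T)(1-qT)}$, where $P(T) \in \mathbb{Z}[T]$ has degree $2g$ and constant term $1$; this rationality follows from the Riemann--Roch theorem applied to divisors on $C$ together with surjectivity of the degree map on divisor classes. Writing $P(T) = \prod_{i=1}^{2g}(1 - \alpha_i T)$ over $\overline{\mathbb{Q}}$ and comparing coefficients in the logarithmic derivative of the Euler product for $Z$ yields the point-count formula $\#C(\mathbb{F}_q) = q + 1 - \sum_{i=1}^{2g} \alpha_i$. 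Given this, the triangle inequality reduces the theorem to the claim that $|\alpha_i| = \sqrt{q}$ for every $i$ and every complex embedding.

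Next I would establish the bound on the reciprocal roots. The clean route is to interpret the $\alpha_i$ as the eigenvalues of the $q$-power Frobenius endomorphism $\pi \in \mathrm{End}(J)$ acting on the Jacobian $J = \mathrm{Jac}(C)$, an abelian variety of dimension $g$; equivalently $P(T)$ is the reciprocal characteristic polynomial of $\pi$ on the $\ell$-adic Tate module $T_\ell J$ for any $\ell \neq \mathrm{char}(\mathbb{F}_q)$, and the point-count identity above is the Lefschetz fixed-point formula for $C$. The key input is that $\pi$ is compatible with a polarization: the composite $\pi \hat{\pi}$ of $\pi$ with its Rosati adjoint equals multiplication by $q$ on $J$. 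Combined with the positivity of the Rosati involution on $\mathrm{End}(J) \otimes \mathbb{R}$, this forces $\beta \bar{\beta} = q$, hence $|\beta| = \sqrt{q}$, for every conjugate $\beta$ of every $\alpha_i$.

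The main obstacle is exactly this last step, the Riemann Hypothesis for curves, which requires genuine geometric input rather than formal manipulation of $Z$. Positivity of the Rosati involution is itself nontrivial, proved via intersection theory and resting ultimately on the Hodge index theorem (or the Castelnuovo--Severi inequality) for the surface $C \times C$, applied to the diagonal and the graph of Frobenius. An alternative, more elementary route bypassing the Jacobian is the Stepanov--Bombieri method: one first proves the one-sided estimate $\#C(\mathbb{F}_q) \le q + 1 + (2g + O(1))\sqrt{q}$ by exhibiting an auxiliary rational function on $C$ with a high-order pole at one point and a zero of prescribed multiplicity at every rational point, then upgrades it to the two-sided bound by applying the estimate over $\mathbb{F}_{q^2}$ and using the functional equation $P(T) = q^g T^{2g} P(1/(qT))$, which pairs $\alpha_i$ with $q/\alpha_i$. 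In either approach, once $|\alpha_i| = \sqrt{q}$ is known the conclusion $|q + 1 - \#C(\mathbb{F}_q)| = |\sum_{i=1}^{2g} \alpha_i| \le 2g\sqrt{q}$ is immediate.
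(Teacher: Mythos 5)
The paper does not prove this statement: it is quoted as a classical result (Weil's theorem) and used as a black box, so there is no internal argument to compare yours against. Your outline is the standard proof and is correct as a roadmap: rationality of $Z(C/\mathbb{F}_q,T)$ via Riemann--Roch, the identity $\#C(\mathbb{F}_q)=q+1-\sum_i\alpha_i$, and reduction to the Riemann Hypothesis $|\alpha_i|=\sqrt{q}$, which you correctly attribute either to $\pi\hat{\pi}=q$ plus positivity of the Rosati involution on $\mathrm{End}(\mathrm{Jac}\,C)\otimes\mathbb{R}$, or to the Stepanov--Bombieri method. Two caveats. First, the genuinely hard content --- positivity of Rosati (ultimately the Hodge index theorem on $C\times C$) or the construction of Stepanov's auxiliary function --- is named but not carried out, so this is a proof sketch resting on cited deep inputs; that is entirely appropriate for a classical theorem, but it should be presented as such. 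Second, in the Stepanov--Bombieri route, deducing $|\alpha_i|\le\sqrt{q}$ from the one-sided estimate requires applying that estimate over $\mathbb{F}_{q^n}$ for all (or at least infinitely many) $n$ and using the growth of $\sum_i\alpha_i^n$; applying it only over $\mathbb{F}_{q^2}$ as you write is not enough by itself, although the subsequent use of the functional equation to pair $\alpha_i$ with $q/\alpha_i$ is the right way to convert the inequality $|\alpha_i|\le\sqrt{q}$ into the equality $|\alpha_i|=\sqrt{q}$.
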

 
 The problem of improving this bound is very difficult. Many well-known mathematicians, such as J.-P. Serre, V. Drinfeld and others, devoted considerable efforts to its study. A reason why this problem attracts much attention is that it has quite a few applications to the coding theory, cryptography, etc. 

 Let us denote by $N_{q}(g)$ the maximum of the number of points on a smooth projective curve $C$ of genus $g$ over $\mathbb F_{q}$. By using the Weil bound, one immediately sees that $N_{q}(g) \le q+1 + 2g\sqrt{q}$. 
 
 The goal of this paper is to find new lower bounds for the number $N_{q}(g)$ for small  $g$ and $q$, that is for $g \le 50$ and $q \le 19$.  Nowadays we do not know exactly the number $N_{q}(g)$ for many pairs $(q, g)$ from the above intervals, except for the cases $g=1$, 2, 3 or 4.  
 For example, $N_2(1)=5$, $N_2(2)=6$, $N_2(4)=8$, but $N_2(16)$ is either 17 or 18. Just a little improvement of the known bounds requires in many cases both modern mathematical tools and computer support.  
The  up-to-date tables for $N_{q}(g)$ are available on the web-site \href{http://www.manypoints.org/}{manypoints.org}.

 \section{Methods}
 \subsection{Serre's Example}
 
 Here we describe one of the ideas for constructing curves with many points. This idea belongs to Serre and its generalization is very important for our purposes. The following examples are taken from \cite{Hunt}.  
 
 \begin{theorem}[Serre]
 $N_2(4)\ge 8$ and $N_2(11) \ge 14$.
 \end{theorem}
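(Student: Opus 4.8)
The plan is to realise each of the two curves as an abelian cover $\pi\colon C\to C_0$ of a curve $C_0/\mathbb F_2$ of much smaller genus, built so that a large fraction of the $\mathbb F_2$-rational points of $C_0$ split completely in $C$; global class field theory for the function field $\mathbb F_2(C_0)$ is precisely what guarantees that such covers exist and tells us which points split. In the language I would use: an abelian cover of $C_0$ unramified outside a modulus $\mathfrak m$ and with conductor dividing $\mathfrak m$ corresponds to a quotient $G$ of the ray class group $\mathrm{Cl}_{\mathfrak m}(C_0)$, and by Artin reciprocity a place $P\nmid\mathfrak m$ splits completely in the cover attached to $G$ exactly when the class of $P$ lies in the kernel of $\mathrm{Cl}_{\mathfrak m}(C_0)\twoheadrightarrow G$. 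So I would look for $G$ of a prescribed order $n$, geometrically connected (no nontrivial constant field extension is allowed, or a degree-$1$ place could never split), whose kernel contains the classes of as many rational places of $C_0$ as possible; if $s$ of them split then $\#C(\mathbb F_2)\ge n\cdot s$, and I would also need the genus of $C$ to come out exactly right.

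For $N_2(4)\ge 8$ I would take for $C_0$ an elliptic curve over $\mathbb F_2$ with $\#C_0(\mathbb F_2)=5=N_2(1)$, and form the $\mathbb Z/2$-cover $C\colon y^2+y=f$ for a well-chosen $f\in\mathbb F_2(C_0)$ whose pole divisor is supported on two rational points, with pole orders $1$ and $3$ (both odd, so the Artin--Schreier equation is already in reduced form). Then $C\to C_0$ is wildly ramified at exactly those two places, the different of $C/C_0$ has degree $(1+1)+(3+1)=6$, and Riemann--Hurwitz gives $2g_C-2=2(2\cdot 1-2)+6=6$, i.e. $g_C=4$. At each of the two branch points $C$ has one rational point; at each of the remaining three rational points $P$ of $C_0$ the fibre consists of two rational points when $f(P)=0$ and of a single place of degree $2$ when $f(P)=1$. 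A Riemann--Roch count shows that the space of functions with the prescribed poles that also vanish at all three of those points is nonzero, so I can force all three to split, giving $\#C(\mathbb F_2)=2+3\cdot 2=8$.

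For $N_2(11)\ge 14$ the recipe is the same, now over a base curve of slightly larger genus (an elliptic curve, a curve of genus $\le 6$, or even $\mathbb P^1$ with a cover of degree $\ge 8$) and with a cover of larger degree: I would choose the modulus $\mathfrak m$ and the abelian quotient so that the conductor--discriminant formula lands the genus on $11$ while the point count over the split (and the fully ramified rational) places of the base is pushed up to $14$. The steps are identical in spirit to the genus-$4$ case: fix the base, compute its ray class group (equivalently, its Jacobian over $\mathbb F_2$ together with the relevant local unit groups), locate a quotient of the required order whose kernel kills the chosen places, and verify the genus via Riemann--Hurwitz.

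The main obstacle is exactly this simultaneous fit: one must exhibit an explicit $C_0$ together with a modulus $\mathfrak m$ and a set of places for which (i) $\mathrm{Cl}_{\mathfrak m}(C_0)$ has a quotient of precisely the needed order whose kernel contains the classes of those places, (ii) this quotient is geometric, and (iii) the Riemann--Hurwitz / conductor--discriminant bookkeeping produces \emph{exactly} genus $4$, respectively $11$. None of these three conditions is automatic, and engineering all of them at once is the real content of the argument; it is also precisely the step that the remainder of the paper mechanises into a systematic, computer-assisted search over all genus-$3$ base curves and all odd primes $q<19$.
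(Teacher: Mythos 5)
Your argument for $N_2(4)\ge 8$ is essentially the paper's: the paper also takes the elliptic curve $y^2+y=x^3+x$ with its five rational points, identifies $C_0(\mathbb F_2)\cong\mathbb Z/5\mathbb Z$, and produces a function $f_1$ with divisor $-3P_0-P_1+2P_2+P_3+P_4$, so that the Artin--Schreier cover $w^2+w=f_1$ has genus $4$ and $2+3\cdot 2=8$ points. One small point you should not leave to Riemann--Roch alone: the count $\dim L(3P_0+P_1)-3\ge 1$ only gives a nonzero $f$ with pole divisor \emph{at most} $3P_0+P_1$, while your Riemann--Hurwitz computation needs the pole orders to be \emph{exactly} $3$ and $1$ (a pole of order $\le 2$ at $P_0$, or none at $P_1$, would change the different and hence the genus). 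For this particular curve that is forced, because neither $P_2+P_3+P_4-3P_0$ nor $P_2+P_3+P_4-2P_0-P_1$ is principal (their images $4$ and $3$ in $\mathbb Z/5\mathbb Z$ are nonzero); the paper sidesteps the issue by writing down the exact principal divisor from the start using the group structure. Either way, this half is fine.

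The $N_2(11)\ge 14$ half is a genuine gap: you describe the shape of the object you would search for, and correctly identify the three constraints that must be met simultaneously, but you do not exhibit a base curve, a modulus, or a quotient, and nothing in your sketch shows that the required cover exists. The paper's construction is short and you should know it: take a \emph{second} function $f_2$ with divisor $-P_0-3P_1+P_2+P_3+2P_4$ (principal for the same reason as $f_1$), let $C_2\colon w^2+w=f_2$ be the corresponding genus-$4$ cover, and form the fibre product $C_1\times_{C_0}C_2$. This is a $(\mathbb Z/2\mathbb Z)^2$-cover of $C_0$; its three quadratic subcovers are $C_1$, $C_2$ (genus $4$ each) and $u^2+u=f_1+f_2$, which has poles of order $3$ at both $P_0$ and $P_1$, hence different degree $8$ and genus $5$, so the fibre product has genus $4+4+5-2\cdot 1=11$. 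The points $P_2,P_3,P_4$ split completely ($f_1$ and $f_2$ both vanish there), contributing $3\cdot 4=12$ rational points, and $P_0,P_1$ are totally ramified (all three subcovers ramify there, so inertia is the full group), contributing $2$ more, for a total of $14$. The fibre-product (compositum) trick is the missing idea: rather than hunting for a single cover of genus $11$, one reuses two covers of genus $4$ whose ramification loci interact in a controlled way.
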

 
 \begin{proof}
   Consider the elliptic curve $C$ defined over $\mathbb F_2$ by the equation $y^2 +y = x^3 + x$. It is easy to see that $C$ has exactly five rational points: $P_{0}=\infty$, $P_{1}=(0,0)$, $P_2=(1,0)$, $P_3=(1,1)$ and $P_4=(0,1)$ and the map $\phi$: $P_{i} \mapsto i$, from $C(\mathbb F_{2})$ to $ \mathbb Z/5 \mathbb Z$ is an isomorphism of abelian groups. Hence, there is a function $f$ on $C$  with the divisor $D=[a_0; a_1; a_2; a_3; a_4] = \sum a_i P_i$ if and only if $\sum a_i = 0 $ and $\sum i* a_i = 0 \mod 5$.
 
 So, the divisor $D_1=[-3,-1,2,1,1]$ is the divisor of some function $f_1 (x)$. One may take the Artin-Schreier  extension $C_1$: $w^2 + w = f_1(x)$. According to the Riemann-Hurwitz formula, it has genus 4. Moreover it is easy to see that it  has exactly eight rational points. This shows that $N_2(4) \ge 8$. For the same reason, the divisor $D_2=[-1,-3,1,1,2]$ is the divisor of some function $f_2(x)$. The Artin-Schreier covering $C_2$: $w^2+w=f_2(x)$ also has genus 4 and eight rational points. On can take the fibre product of $C_1$ and $C_2$. It is  a genus 11 curve with 14 rational points.  
 \end{proof}

So, we have $N_2(4)\ge 8$ and $N_2(11) \ge 14$. Let us compare this result with the Weil-bound: $$N_2	(4) \le 2+1+8\sqrt{2} \approx 14.313$$  and $$N_2(11) \le 2 + 1 + 22\sqrt{2} \approx 34.11.$$ 

It is possible to improve this bound. The point is that in both cases mentioned above $g$ is ``sufficiently greater''  than $q$. Let us illustrate this idea by the following theorem: 

\begin{theorem}[Ihara bound]
We have $$ N_{q}(g) \le q+1 + 1/2 (\sqrt{(8q+1)g^2 + 4g(q^2-q)} - g ).$$ 
\end{theorem}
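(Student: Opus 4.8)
The plan is to derive the Ihara bound from the Weil-type ``explicit formula'' machinery, using the second-moment refinement of the Weil bound applied simultaneously over $\mathbb F_q$ and $\mathbb F_{q^2}$. Write the zeta function of $C$ as $Z(t) = \frac{P(t)}{(1-t)(1-qt)}$ with $P(t) = \prod_{i=1}^{g}(1-\alpha_i t)(1-\bar\alpha_i t)$, where $|\alpha_i| = \sqrt q$ by the Riemann Hypothesis for curves. Setting $\alpha_j = \sqrt q\, e^{i\theta_j}$, the point counts are $\#C(\mathbb F_{q^n}) = q^n + 1 - \sum_{j=1}^{g} 2 q^{n/2}\cos(n\theta_j)$. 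The key inputs are the two inequalities coming from positivity of the number of places: first, $\#C(\mathbb F_q) \le \#C(\mathbb F_{q^2})$ (since every $\mathbb F_q$-point is an $\mathbb F_{q^2}$-point), and second, the Weil bound $\#C(\mathbb F_{q^2}) \le q^2 + 1 + 2g q$.

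First I would set $N = \#C(\mathbb F_q)$ and expand the elementary trigonometric identity $\cos(2\theta_j) = 2\cos^2\theta_j - 1$ to relate the $\mathbb F_{q^2}$-count to $\sum_j \cos^2\theta_j$. Combining
\[
\#C(\mathbb F_{q^2}) = q^2 + 1 - \sum_{j=1}^{g} 2q\cos(2\theta_j) = q^2 + 1 + 2gq - \sum_{j=1}^{g} 4q\cos^2\theta_j
\]
with the inequality $\#C(\mathbb F_q) = q + 1 - \sum_j 2\sqrt q \cos\theta_j \le \#C(\mathbb F_{q^2})$ yields, after rearrangement,
\[
\sum_{j=1}^{g} 4q\cos^2\theta_j \;-\; \sum_{j=1}^{g} 2\sqrt q\cos\theta_j \;\le\; q^2 - q + 2gq.
\]
Next I would apply Cauchy--Schwarz (or complete the square term-by-term): writing $x_j = \cos\theta_j$, each summand satisfies $4q x_j^2 - 2\sqrt q x_j \ge$ a quantity controlled by $\sum_j x_j$, and by Cauchy--Schwarz $\left(\sum_j x_j\right)^2 \le g \sum_j x_j^2$. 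Substituting $S = \sum_j 2\sqrt q\, x_j = q + 1 - N$ and treating the resulting expression as a quadratic inequality in $S$ gives a bound of the form $S^2 \cdot(\text{something}) + S\cdot(\text{something}) \le g\cdot(q^2-q+2gq)$, whose largest admissible root, after clearing denominators and taking the square root, produces exactly $N - q - 1 \le \frac12\bigl(\sqrt{(8q+1)g^2 + 4g(q^2-q)} - g\bigr)$.

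The main obstacle is bookkeeping: one must chase the constants through the Cauchy--Schwarz step and the quadratic formula carefully so that the discriminant comes out as $(8q+1)g^2 + 4g(q^2-q)$ rather than something merely similar; the $8q+1$ arises precisely from combining the $4q\sum x_j^2$ term (which, via Cauchy--Schwarz against the linear term, contributes the $q$ inside, and the cross term contributes the $+1$). A secondary subtlety is confirming that the relevant root of the quadratic is the larger one and that the sign of $N - q - 1$ is handled correctly, so that the bound is stated as an upper bound on $N_q(g)$; this is routine but must be done with care. Once the algebra is arranged, the conclusion for $N_q(g)$ follows by taking the maximum over all curves $C$ of genus $g$.
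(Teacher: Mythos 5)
Your proposal is correct, but it is doing something the paper does not: the paper's ``proof'' of this theorem is only a pointer to the references \cite{Tsf} and \cite{Vir1}, whereas you reconstruct Ihara's original argument in full. Your chain of reasoning checks out. Writing $x_j=\cos\theta_j$ and $T=\sum_j x_j$, the inequality $\#C(\mathbb F_q)\le \#C(\mathbb F_{q^2})$ together with $\cos 2\theta_j=2\cos^2\theta_j-1$ gives $4q\sum_j x_j^2-2\sqrt q\,T\le q^2-q+2gq$; Cauchy--Schwarz in the form $\sum_j x_j^2\ge T^2/g$ then yields $\tfrac{4q}{g}T^2-2\sqrt q\,T\le q^2-q+2gq$, and substituting $S=N-q-1=-2\sqrt q\,T$ turns this into $S^2+gS-g(q^2-q+2gq)\le 0$, whose larger root is exactly $\tfrac12\bigl(\sqrt{(8q+1)g^2+4g(q^2-q)}-g\bigr)$, as claimed. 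Two small remarks: the Weil bound over $\mathbb F_{q^2}$, which you list as a ``key input,'' is in fact never used --- the only positivity input needed is $\#C(\mathbb F_q)\le\#C(\mathbb F_{q^2})$; and the phrase about each summand $4qx_j^2-2\sqrt q x_j$ being bounded ``term-by-term'' is a slight misdirection, since the estimate must be applied to the sums globally via Cauchy--Schwarz rather than to individual terms (completing the square termwise gives a weaker, $g$-independent bound). Neither issue affects the validity of the argument. What your approach buys is a self-contained, elementary proof occupying a few lines; what the paper's citation buys is access to the stronger Oesterl\'e/explicit-formula refinements mentioned immediately afterwards, which require genuinely more machinery.
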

In particular if $g \ge \frac{\sqrt{q}(\sqrt{q}-1)}{2}$, then Ihara bound improves the Weil bound.  

\begin{proof}
See \cite{Tsf} or \cite{Vir1}.
\end{proof}

Let us take here $q=2$ and $g=4$, $11$. The Ihara bound gives us: 

$$ N_{2}(4) \le 1 + 2\sqrt{19} \approx 9.7177$$ 
and

$$ N_2(11) \le \frac{1}{2}(\sqrt{2145} -5) \approx 20.6571.$$

Actually $N_2(4)=8$ and $N_2(11)=14$, but it is quite a non-trivial result, which requires the so called Weil-explicit formulae and Oesterl\'e bound. See \cite{Voight1}.

\subsection{A generalization}
Here we describe a generalization of the above method. This generalization has been used in many papers (e.g. \cite{Cur}). For clarity we save the original notation. 

First of all, we notice that the equation $w^2+w=f_1(x)$ from the above theorem gives an abelian extension where the points $(x,y)$ such that $f_1(x)=0$ split completely. In contrast to the above example, for the sake of clarity, we restrict ourselves to working with unramified abelian covers. The unramified abelian covers of $C$ are parameterized by the subgroups of the Picard group $Pic^{0}(C)$. Thus, we find ourselves working with the class field theory.

Let us fix the ground field $k= \mathbb F_{q}$ and let us consider a curve $C$ of genus $g$ over $k$. We denote by $F$ the function field $\mathbb F_q(C)$. By using class field theory we want to construct abelian extensions $F'$ of $F$ corresponding to new curves $C'$ with many points. 

Let  $S = \{P_{i} \}$ be the set of all rational points of $C$. Let us fix one rational point $O$ and consider the $S_{O}$-Hilbert class field which we denote by $F_{O}$. By definition, $F_O$ is the maximal unramified abelian extension of $F$ in which $O$ splits completely.

The class field theory give us the isomorphism:

$$\phi \colon Pic^{0}(C) \to Gal(F_O/F).$$

The isomorphism $\phi$ maps the class $[P- \deg(P)O]$ to the Artin symbol of $P$(the Frobenius map at point $P$).  
By using this isomorphism, one produces new curves. Namely, if $G$ is a subgroup of $Pic^{0}(C)$ of index $d=[Pic^{0}(C):G]$, then its image $\phi(G)$ is a subgroup of $Gal(F_O / F)$.  Hence, we have the subfield $F_{O}^{\phi(G)}$ of $F_O$ fixed by $\phi (G)$. By definition of $\phi$ a rational point $P$ splits completely if and only if $[P-\deg(P) O] \in G$. This gives us an unramified abelian extension, that corresponds  to a new curve. Since  the extension is unramified we can control its genus and its number of rational points.  

Thus, for any pair $[G,O]$ we get the unramified extension $F_{O}^{\phi(G)}$ of degree $d$. It has genus $d(g-1)+1$ and $d \cdot | G \cap \{ [P_i - O] \} |$ rational points. 

By searching through all the subgroups of $Pic^{0}(C)$ and through  all the points of $C$, one can improve the known lower bounds on $N_q(g)$. Note that for a fixed $G$ and different places $O$ we get different extensions. Despite the fact that such extensions have isomorphic Galois groups, the corresponding curves may have different numbers of rational points. We will see concrete examples in the next section.

\section{Examples and Calculations}

\subsection{ How to organize the computation}

Previously, the above method was used mostly for hyper-elliptic curves of small genus.  For example, for the case of genus two curves over finite fields $\mathbb F_q$ with cardinality $q \le 16$ this search was implemented in \cite{Cur}. In our search, we do the same calculation for all genus three curves. We use Magma software to perform a computer search.

If we have a genus $3$ curve $C$ then it is either a hyper-elliptic curve, or a  plane quartic. 

 A plane quartic is a geometrically smooth projective curve given by an equation of the form $f(x,y,z)=0$, where  $f(x,y,z)\in \mathbb F_{q}[x,y,z]$ is a homogeneous polynomial degree 4. By using Invariant theory, it is possible to describe very explicitly  the ``moduli space'' of plane quartics over finite fields. This was done by Lercier, Ritzenthaler, Rovetta, and Sijsling in \cite{Quart1}.

Otherwise, if $C$ is a hyper-elliptic curve, then its affine part has a smooth model $y^2 = f(x)$, where $f(x)$ is a polynomial of degree 7 or 8 without multiple roots over algebraic closure of the ground field. To obtain a projective model one has to take the normalization of the projective closure $C$.  A good thing here is that Magma allows one to work with the projective model $C$ when only $f$ is given. But unfortunately, we do not have an analogue of the database \cite{Quart1} in this case, so it takes much machine resources to provide calculation for all hyper-elliptic curves for a given genus.  
 
Finally, we run the algorithm for the base field $\mathbb F_p$, for the prime $p$ equal to 3, 5, 7, 11, 13, 17 and 19. We refer the reader to the next section for details.

Let us show how it works for some concrete examples.

\subsection{Concrete examples}

Unfortunately, for $p=3$  we could not find any new curves. 
Otherwise, we can take the base field to be $\mathbb F_{5}$, $\mathbb F_{7}$, $\mathbb F_{11}$, $\mathbb F_{13}$, $\mathbb F_{17}$ or $\mathbb F_{19}$ and provide many examples improving previously known bounds. 

Let us consider the case $ k=\mathbb F_{7}$. By ranging over plane quartics , we have found the following examples.

Let $C$ be the curve defined in $\mathbb P^2$ by the equation $ 6x^4 + y^3z + 6x^2z^2 + 4xz^3 + 6z^4=0$. The curve $C$ has exactly 14 rational points, namely: 
$P_0 =(0 : 1 : 1)$, $P_0' = (0 : 2 : 1)$, $P''_0 =  (0 : 4 : 1)$,
$P_1=(1 : 3 : 1)$, $P_1'= (1 : 5 : 1)$, $P_1''=(1 : 6 : 1)$,
$P_2=(2 : 3 : 1)$, $P_2'=(2 : 5 : 1)$, $P_2''=(2 : 6 : 1)$,
$P_5=(5 : 1 : 1)$, $P_5'=(5 : 2 : 1)$, $P_5''=(5 : 4 : 1)$,
$P_6=(6: 0 : 1)$,  $\infty = (0 : 1 : 0)$.

According to Magma,  $Pic^0(C)$ is an abelian group isomorphic to $\mathbb Z/{903\mathbb Z}$. Let us take the index 7 subgroup $G$ spanned by $7$, which is isomorphic to $\mathbb Z/{129\mathbb Z}$, and $O = P_0$. Then it gives us a curve of genus 15 with 56 rational points. This improves the previous lower bound for $g=15$, which was 52.

Next, let us consider the curve $C$ over $\mathbb F_7$ defined by the equation $6x^4 + y^3z + 2x^2z^2 + z^4=0$. It has 12 rational points: 
$P_0=(0 : 3 : 1)$, $P_0'=(0 : 5 : 1)$, $P_0''= (0 : 6 : 1)$,
$P_2=(2 : 0 : 1)$, 
$P_3=(3 : 3 : 1)$, $P_3'=(3 : 5 : 1)$, $P_3''=(3 : 6 : 1)$, 
$P_4=(4 : 3 : 1)$, $P_4'=(4 : 5 : 1)$, $P_4''=(4 : 6 : 1)$, 
$P_5=(5 : 0 : 1$), $\infty= (0 : 1 : 0)$.

Its class group is isomorphic to $\mathbb Z /3 \mathbb Z \oplus \mathbb Z /6 \mathbb Z \oplus \mathbb Z /42 \mathbb Z$. If one takes $O=P_2$ and the subgroup $G$ spanned by $v=(1,2,2)$, then we get a curve of genus 9 with 36 rational points. The previous bound for $N_7(9)$ was 32.

Let us change the base field and take $\mathbb F_{13}$. Here we have also found some interesting examples improving the previous bounds. For instance, consider the curve $C$ defined by $12x^4 + y^3z + z^4=0$ which has the class group $(\mathbb Z /3 \mathbb Z)^3 \oplus( \mathbb Z /12 \mathbb Z)^2$. One takes  $O = (0 : 4 : 1)$ and the group $G$ defined as follows: let $m_1 = g(2)$, $m_2 = g(4)$, $m_3 = g(1) + g(5)$, where $g(k)$ denotes the generator of the $k$-th component of $Pic^{0}(C)$ in the above decomposition. Let $G$ be spanned by $m_1$,$m_2$ and $m_3$. Then, we get an extension of genus 19 with 108 rational points. The previous bound for $N_{13}(19)$ was $90$.

We now summarize the results of our computations. 

\section{Main results}
In this part we summarize all results we have found during this research. We provide this data in the following way: we give the genus and the number of rational points of a certain covering of the base curve. We also give the previously known bound to compare it with value we obtained.  

\begin{table}[]
\centering
\caption{Results for the base fields $\mathbb F_5$,$\mathbb F_7$, $\mathbb F_{11}$ }
\label{my-label}	
\begin{tabular}{|l|l|l|r|}
\hline
The base Field   & Genus & Improvements & Previous Result \\ \hline
$\mathbb F_5$    & 25    & 60         & 55--66                 \\ \hline
$\mathbb F_7$    & 9     & 36         & 32--41                 \\ \hline
$\mathbb F_7$    & 15    & 56         & 60--77                 \\ \hline
$\mathbb F_7$    & 21    & 70         & 60--77                 \\ \hline
$\mathbb F_7$    & 29    & 84         & unknown--98            \\ \hline
$\mathbb F_7$    & 31    & 90         & unknown--103           \\ \hline
$\mathbb F_7$    & 33    & 96         & unknown--109           \\ \hline
$\mathbb F_7$    & 35    & 102        & unknown--114           \\ \hline
$\mathbb F_7$    & 37    & 108        & unknown--119           \\ \hline
$\mathbb F_7$    & 39    & 95         & unknown--125           \\ \hline
$\mathbb F_7$    & 43    & 105        & unknown--135           \\ \hline
$\mathbb F_7$    & 45    & 110        & unknown--140           \\ \hline
$\mathbb F_7$    & 47    & 115        & unknown--145           \\ \hline
$\mathbb F_7$    & 49    & 120        & 114--150               \\ \hline
$\mathbb F_{11}$ & 9     & 52         & 48--59                 \\ \hline
$\mathbb F_{11}$ & 13    & 66         & 60--77                 \\ \hline
$\mathbb F_{11}$ & 21    & 90         & 80--110                \\ \hline
$\mathbb F_{11}$ & 23    & 99         & 88--119                \\ \hline
$\mathbb F_{11}$ & 25    & 108        & 96--127                \\ \hline
$\mathbb F_{11}$ & 27    & 104        & unknown--135           \\ \hline
$\mathbb F_{11}$ & 31    & 120        & unknown--139           \\ \hline
$\mathbb F_{11}$ & 35    & 136        & unknown--164           \\ \hline
$\mathbb F_{11}$ & 37    & 144        & unknown--171           \\ \hline
$\mathbb F_{11}$ & 43    & 168        & unknown--192           \\ \hline
$\mathbb F_{11}$ & 45    & 176        & unknown--199           \\ \hline
$\mathbb F_{11}$ & 47    & 161        & unknown--206           \\ \hline
$\mathbb F_{11}$ & 49    & 192        & unknown--213           \\ \hline
\end{tabular}
\end{table}

\begin{table}[]
\centering
\caption{Results for the base fields $\mathbb F_{13}$, $\mathbb F_{17}$}
\label{my-label}
\begin{tabular}{|l|l|l|l|}
\hline
The base field   & Genus & Improvements & Previous Result \\ \hline
$\mathbb F_{13}$ & 5     & 42         & 40--44                 \\ \hline
$\mathbb F_{13}$ & 19    & 108        & 90--115                 \\ \hline
$\mathbb F_{13}$ & 23    & 110        & unknown--133           \\ \hline
$\mathbb F_{13}$ & 25    & 120          & unknown--142    \\ \hline
$\mathbb F_{13}$ & 27    & 130          & unknown--152    \\ \hline
$\mathbb F_{13}$ & 31    & 135          & unknown--170    \\ \hline
$\mathbb F_{13}$ & 33    & 160          & 128--179        \\ \hline
$\mathbb F_{13}$ & 37    & 162          & 144--195        \\ \hline
$\mathbb F_{13}$ & 39    & 152          & unknown--203    \\ \hline
$\mathbb F_{13}$ & 41    & 180          & 160--211        \\ \hline
$\mathbb F_{13}$ & 47    & 184          & unknown--235    \\ \hline
$\mathbb F_{13}$ & 49    & 192          & unknown--243    \\ \hline
$\mathbb F_{17}$ & 5     & 48           & unknown--53     \\ \hline
$\mathbb F_{17}$ & 7     & 60           & unknown--70     \\ \hline
$\mathbb F_{17}$ & 9     & 72           & unknown--83     \\ \hline
$\mathbb F_{17}$ & 11    & 80           & unknown--96     \\ \hline
$\mathbb F_{17}$ & 13    & 90           & unknown--107    \\ \hline
$\mathbb F_{17}$ & 15    & 98           & unknown--118    \\ \hline
$\mathbb F_{17}$ & 17    & 112          & unknown--129    \\ \hline
$\mathbb F_{17}$ & 19    & 117          & unknown--140    \\ \hline
$\mathbb F_{17}$ & 21    & 130          & unknown--150    \\ \hline
$\mathbb F_{17}$ & 23    & 132          & unknown--161    \\ \hline
$\mathbb F_{17}$ & 25    & 144          & unknown--172    \\ \hline
$\mathbb F_{17}$ & 27    & 143          & unknown--183    \\ \hline
$\mathbb F_{17}$ & 29    & 154          & unknown--194    \\ \hline
$\mathbb F_{17}$ & 31    & 165          & unknown--205    \\ \hline
$\mathbb F_{17}$ & 33    & 192          & unknown--216    \\ \hline
$\mathbb F_{17}$ & 35    & 187          & unknown--226    \\ \hline
$\mathbb F_{17}$ & 37    & 216          & unknown--237    \\ \hline
$\mathbb F_{17}$ & 39    & 190          & unknown--248    \\ \hline
$\mathbb F_{17}$ & 41    & 240          & unknown--258    \\ \hline
$\mathbb F_{17}$ & 43    & 210          & unknown--269    \\ \hline
$\mathbb F_{17}$ & 45    & 220          & unknown--280    \\ \hline
$\mathbb F_{17}$ & 47    & 207          & unknown--291    \\ \hline
$\mathbb F_{17}$ & 49    & 240          & unknown--301    \\ \hline

\end{tabular}
\end{table}

\begin{table}[]
\centering
\caption{Results for the base field $\mathbb F_{19}$}
\label{my-label}
\begin{tabular}{|l|l|l|l|}
\hline
The base field   & Genus & Improvements & Previous Bound \\ \hline
$\mathbb F_{19}$ & 5     & 54           & unknown--60     \\ \hline
$\mathbb F_{19}$ & 7     & 66           & unknown--76     \\ \hline
$\mathbb F_{19}$ & 9     & 80           & unknown--92    \\ \hline
$\mathbb F_{19}$ & 11    & 90           & unknown--104   \\ \hline
$\mathbb F_{19}$ & 13    & 96           & unknown--117   \\ \hline
$\mathbb F_{19}$ & 15    & 112          & unknown--128   \\ \hline
$\mathbb F_{19}$ & 17    & 112          & unknown--140   \\ \hline
$\mathbb F_{19}$ & 19    & 126          & unknown--153   \\ \hline
$\mathbb F_{19}$ & 21    & 140          & unknown--164   \\ \hline
$\mathbb F_{19}$ & 23    & 143          & unknown--175   \\ \hline
$\mathbb F_{19}$ & 25    & 168          & unknown--186   \\ \hline
$\mathbb F_{19}$ & 27    & 156          & unknown--198   \\ \hline
$\mathbb F_{19}$ & 29    & 196          & unknown--209   \\ \hline
$\mathbb F_{19}$ & 31    & 180          & unknown--221   \\ \hline
$\mathbb F_{19}$ & 33    & 192          & unknown--232   \\ \hline
$\mathbb F_{19}$ & 35    & 204          & unknown--244   \\ \hline
$\mathbb F_{19}$ & 39    & 209          & unknown--267   \\ \hline
$\mathbb F_{19}$ & 43    & 231          & unknown--289   \\ \hline
$\mathbb F_{19}$ & 45    & 242          & unknown--301   \\ \hline
$\mathbb F_{19}$ & 47    & 253          & unknown--312   \\ \hline
\end{tabular}
\end{table}

\newpage

\bibliography{mybib}{}
\bibliographystyle{plain}

\newpage

\tableofcontents

\end{document}